\documentclass[12pt]{amsart}

\usepackage[T1]{fontenc}
\usepackage{lmodern}
\usepackage{amsmath,amssymb,amsthm}
\usepackage{geometry}
\usepackage{microtype}

\newtheorem{theorem}{Theorem}[section]

\newcommand{\D}{\mathcal D}
\newcommand{\C}{\mathbb C}
\newcommand{\kD}{k_{\D}}
\newcommand{\kapD}{\kappa_{\D}}

\title{Isometries of the Diamond}
\author{Armen Edigarian}
\thanks{Funded by the National Science Centre, Poland under the Weave UNISONO, UMO-2025/07/Y/ST1/00146}
\subjclass[2020]{Primary 32F45; Secondary 32H02.}
\begin{document}

\begin{abstract}
Chavan and Zwonek recently proved that every $C^1$ Kobayashi distance
isometry of the diamond is holomorphic or antiholomorphic; see \cite{CZ}.
We show that the $C^1$ assumption is superfluous.
\end{abstract}

\maketitle

\section{Introduction}

Let
\[
\D:=\{(z_1,z_2)\in\C^2: |z_1|+|z_2|<1\}.
\]
We denote by $\kapD$ the Kobayashi--Royden metric of $\D$ and by $\kD$
its Kobayashi distance.

Chavan and Zwonek proved that every $C^1$ Kobayashi distance isometry
of $\D$ is holomorphic or antiholomorphic; see \cite{CZ}. We obtain the
following equivalent formulation, which in particular removes the
regularity assumption for distance isometries.

\begin{theorem}\label{thm:KR}
Let $F:\D\to\D$ be any map. The following conditions are equivalent:
\begin{enumerate}
\item $F$ is of class $C^1$ and
$\kapD(F(z);dF_zX)=\kapD(z;X)$ for any $z\in\D$ and any $X\in\C^2$;
\item $\kD(F(z),F(w))=\kD(z,w)$ for any $z,w\in\D$;
\item up to a permutation of the coordinates, either
\[
F(z_1,z_2)=(\omega_1z_1,\omega_2z_2)
\quad\text{ or }
F(z_1,z_2)=(\omega_1\overline{z_1},\omega_2\overline{z_2}),
\]
where $|\omega_1|=|\omega_2|=1$.
\end{enumerate}
\end{theorem}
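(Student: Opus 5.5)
The implications (3)$\Rightarrow$(1) and (3)$\Rightarrow$(2) are immediate. The maps listed in (3) are linear or antilinear. Each preserves $|z_1|+|z_2|$, so each maps $\D$ biholomorphically or antibiholomorphically onto itself. Holomorphic automorphisms preserve $\kD$ and $\kapD$. For an antiholomorphic $F$ we write $F=c\circ G$ with $G$ holomorphic and $c(z)=\bar z$; conjugation preserves $\D$ and carries analytic discs to analytic discs (via $\lambda\mapsto\overline{\varphi(\bar\lambda)}$), so it is also an isometry of both $\kD$ and $\kapD$. So the work is in (1)$\Rightarrow$(3) and (2)$\Rightarrow$(3).

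For (2)$\Rightarrow$(3), my plan is to reduce to (1) and then invoke the Chavan--Zwonek theorem \cite{CZ}.

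\emph{Step 1: $F$ is a homeomorphism.} A $\kD$-isometry is injective and continuous, since $\kD$ induces the Euclidean topology. $\D$ is convex and bounded, so $(\D,\kD)$ is complete and its closed balls are compact. An isometric self-embedding of a proper metric space is then surjective, by the standard argument via nets and finite coverings, or via Brouwer invariance of domain together with completeness.

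\emph{Step 2: $F$ is $C^1$.} This is the main obstacle. I would try a Myers--Steenrod type argument adapted to Finsler metrics. $\D$ is convex, so Lempert's theorem gives $\kD$ as the integrated form of $\kapD$, and $\kD$ is the Carathéodory distance. Complex geodesics are explicit: they are the extremal discs of the $\ell^1$-ball, and through each point and each direction there is a real geodesic, namely the image of a diameter of the unit disc under an extremal disc. An isometry maps real geodesics to real geodesics. Therefore at each point $z$ the metric derivative
\[
\lim_{t\to0}F(\gamma(t))\ \text{along geodesics }\gamma
\]
defines a map $X\mapsto dF_zX$ which preserves $\kapD(z;\cdot)$ and is $\mathbb R$-homogeneous. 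I then need it to be $\mathbb R$-linear. For this I would use that the indicatrix $\{X:\kapD(z;X)<1\}$ is a strictly convex body at generic points, so the Busemann–Mazur theorem (metric isometries of strictly convex normed spaces are affine) applies to the tangent limit. To pass from pointwise differentiability to continuity of $dF$, I would use that geodesics depend smoothly on their initial data; this smoothness is the reason for choosing Lempert geodesics. If this fails, the fallback is to prove real-analyticity of $F$ directly on the open dense set where the indicatrix is smooth and strictly convex, and then extend by continuity.

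\emph{Step 3.} Once $F$ is $C^1$ with $dF_z$ preserving $\kapD$, condition (1) holds. The remaining implication is (1)$\Rightarrow$(3), and here we are allowed to use the metric-isometry version \cite{CZ}. First note that a $C^1$ map preserving $\kapD$ preserves $\kD$: $\kD$ is the integrated form of $\kapD$, so $F$ is a distance-decreasing map, and the same argument applied to its inverse gives equality. $F$ is a local diffeomorphism and proper for the same properness reasons as in Step 1, hence it is invertible. \cite{CZ} then shows that $F$ is holomorphic or antiholomorphic. Finally, holomorphic automorphisms of $\D$ fix the origin; the $\ell^1$-ball is a Reinhardt domain that is not a ball or bidisc. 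By Cartan's theorem they are linear, and the linear maps preserving $|z_1|+|z_2|$ are exactly the diagonal unitary maps composed with permutations. The antiholomorphic case reduces to this by composing with $c$.
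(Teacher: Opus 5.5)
\textbf{The genuine gap is Step 2.} The $C^1$ regularity of a distance isometry is the whole new content of (2)$\Rightarrow$(3) beyond Chavan--Zwonek, and the Myers--Steenrod/Busemann route you sketch does not work on $\D$.

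\emph{Smooth dependence of geodesics is not available.} Your argument relies on Lempert geodesics depending smoothly on their initial data. Lempert's uniqueness and regularity theory is established for smoothly bounded strongly convex domains, and $\D$ is neither smooth nor strictly convex.

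\emph{The tangent map is never shown to exist.} $F\circ\gamma$ is only some shortest path. The indicatrices of $\kapD$ are not strictly convex: at $0$ the indicatrix is $\D$ itself, whose boundary contains the real segment $t\mapsto(t,1-t)$, $0\le t\le 1$. For such Finsler structures shortest paths need not be unique, and there is no reason for $F\circ\gamma$ to have a tangent vector at $t=0$. So your ``metric derivative'' $dF_zX$ is not shown to exist, let alone to be linear and continuous in $z$. Your claim of strict convexity ``at generic points'' is unproven.

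\emph{The fallback does not help.} $C^1$ (even real-analytic) regularity on a dense open set does not extend to $C^1$ regularity everywhere ``by continuity''. Known regularity theorems for distance isometries (Calabi--Hartman in the Riemannian case, Matveev--Troyanov in the Finsler case) require some H\"older regularity of the metric. Your argument never uses any regularity of $\kapD$.

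\textbf{The missing idea, which is the paper's entire argument for this step.} Since $\D$ is convex, $\kapD=\gamma_\D$, so $\kapD(z;\cdot)$ is a norm on each real tangent space. Moreover, $\gamma_\D$ is locally Lipschitz on $T\D$, and by Royden's theorem the length distance of this Finsler structure is $\kD$. Hence $\kapD$ is a $C^{0,1}_{\mathrm{loc}}$ Finsler metric. The Matveev--Troyanov theorem (a bijective isometry between $C^{k,\alpha}$ Finsler metrics with $k+\alpha>0$ is $C^{k+1,\alpha}$) then gives $F\in C^{1,1}_{\mathrm{loc}}$, after which Chavan--Zwonek applies.

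\textbf{Smaller points.}
\begin{enumerate}
\item In Step 1, the claim that an isometric self-embedding of a proper metric space is surjective is false; consider $x\mapsto x+1$ on $[0,\infty)$. Only your invariance-of-domain plus completeness argument is valid, and it is the one the paper uses.
\item In Step 3 the reasoning is circular. You obtain $\kD$-invariance by applying the argument to $F^{-1}$, then invoke ``the same properness reasons'' (which came from $\kD$-invariance) to produce $F^{-1}$. A distance-nonincreasing local diffeomorphism is not obviously proper. The paper breaks the circle by path lifting: lifts of a curve have the same $\kapD$-length, so they are $\kD$-Cauchy and converge by completeness. This makes $F$ a covering map of the simply connected $\D$, hence a diffeomorphism.
\item ``Not a ball or bidisc'' is not by itself a reason for the automorphisms of $\D$ to fix $0$. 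You need a result on automorphisms of bounded Reinhardt domains (e.g.\ Sunada's), or the explicit conclusion of Chavan--Zwonek.
\end{enumerate}
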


\section{Proof}
It is easy to see that $D$ is convex, so Lempert’s theorem gives 
$\kD=c_D$ and $\kapD=\gamma_D$, where $c_D$ denotes Carath\'eodory distance and $\gamma_D$ denotes the Carath\'eodory–Reiffen metric; see \cite{JP, Lempert}.

The Minkowski functional of $\D$ is $h(z)=|z_1|+|z_2|$.
Since $\D$ is balanced and convex, the radial discs are complex
geodesics. Consequently,
\begin{equation}\label{eq:radial}
\kD(0,z)
=
p(0,h(z))
=
\operatorname{arctanh}(|z_1|+|z_2|),
\end{equation}
where $p$ denotes the Poincar\'e distance on $\mathbb D$.
In particular, $\D$ is complete with respect to $\kD$. On compact subsets of a bounded
hyperbolic domain, the Kobayashi topology agrees with the Euclidean
topology; see \cite[Proposition 2.6.1]{JP}.

\begin{proof}[Proof of Theorem~\ref{thm:KR}]
Let us show that condition {\rm (1)} implies condition
{\rm (2)}. Note that $\D$ is Kobayashi hyperbolic and, therefore,
$\kapD(z;X)>0$. The equality
\[
\kapD(F(z);dF_zX)=\kapD(z;X)
\]
implies that $dF_zX\neq0$ whenever $X\neq0$. The source and target have the same real dimension, so $dF_z$ is an
isomorphism between real tangent spaces. By the inverse
function theorem, $F$ is a local $C^1$ diffeomorphism.

Royden's theorem states that $\kD(z,w)$ is the infimum of
\[
\int \kapD(\gamma(t);\gamma'(t))\,dt
\]
over all piecewise $C^1$ curves $\gamma$ joining $z$ to $w$; see
\cite{Royden,JP}. It follows that every piecewise $C^1$ curve $\gamma$
satisfies $L_\kappa(F\circ\gamma)=L_\kappa(\gamma)$.
The same identity holds for every local inverse of $F$.

Let
$\alpha:[0,1]\to\D$ be a piecewise $C^1$ path with
$\alpha(0)=F(a)$, and let $\widetilde\alpha$ be its local lift through $a$. Suppose that a maximal lift is defined only on $[0,T)$ for some
$T<1$. If $s<t<T$, then Royden's length formula gives
\[
\kD\bigl(\widetilde\alpha(s),\widetilde\alpha(t)\bigr)
\leq
L_\kappa\bigl(\widetilde\alpha|_{[s,t]}\bigr)
=
L_\kappa\bigl(\alpha|_{[s,t]}\bigr).
\]
The last quantity tends to $0$ as $s,t\to T$. Thus
$\widetilde\alpha(t)$ is a $\kD$-Cauchy curve as $t\to T$.
By completeness, there exists $a_T\in\D$ such that
$\widetilde\alpha(t)\to a_T$. By continuity,
\[
F(a_T)
=
\lim_{t\to T}F(\widetilde\alpha(t))
=
\alpha(T).
\]
Since $F$ is a local diffeomorphism at $a_T$, a local inverse near
$\alpha(T)$ extends the lift beyond $T$, contradicting the maximality
of $T$. Hence every such path lifts to the whole interval $[0,1]$.

We have $F$ is a local homeomorphism, so path lifts with a prescribed initial point are unique.  Since $F$ is a local homeomorphism and every path admits a lift with prescribed initial point, the curve-lifting criterion implies that $F$ is a covering map (see 
\cite[Theorem 4.19]{Forster}).
From the fact that $\D$ is connected and simply connected, this covering is trivial, and hence $F$ is a global $C^1$ diffeomorphism.

Finally, Royden's length formula and the infimum over all curves give
\[
\kD(F(z),F(w))\leq\kD(z,w).
\]
Since $F^{-1}$ is $C^1$, the defining infinitesimal equality implies
that $F^{-1}$ also preserves $\kapD$. Applying the same argument to
$F^{-1}$ gives the reverse inequality. Therefore
\[
\kD(F(z),F(w))=\kD(z,w),
\]
as required.

Condition {\rm (3)} clearly implies both {\rm (1)}
and {\rm (2)}. It remains to prove that {\rm (2)} implies {\rm (3)}.

We know that $\kapD=\gamma_{\D}$, hence, for each
$z\in\D$, the function $X\mapsto\kapD(z;X)$ is a norm on the real
tangent space.
Moreover, $\gamma_{\D}$ is locally Lipschitz on $T\D$ (see \cite{JP}, Proposition 2.7.1 (c)). Thus $\kapD=\gamma_{\D}$ is a $C^{0,1}_{\mathrm{loc}}$ Finsler metric in the sense of Matveev--Troyanov.

By Royden's theorem, the associated Finsler distance is precisely
$\kD$; see \cite{Royden,JP}.  Note that $F$ is injective and continuous because it preserves $k_D$. By invariance of domain, $F(\D)$ is open in $\D$, since $\D\subset\mathbb R^4$.
Since $(\D,\kD)$ is complete and $F$ is an isometry, $F(\D)$ is complete, hence closed in $\D$. Thus $F(\D)=\D$ and 
$F$ is bijective. Matveev and Troyanov prove that a
bijective isometry between $C^{k,\alpha}$ Finsler metrics, with $k+\alpha>0$,
is of class $C^{k+1,\alpha}$; see \cite[Corollary~C]{MT}. Taking
$k=0$ and $\alpha=1$, we obtain that $F$ is of class
$C^{1,1}_{\mathrm{loc}}$, and in particular $F$ is $C^1$.

The theorem of Chavan and Zwonek \cite[Theorem~1]{CZ} now applies and
gives condition {\rm (3)}.
\end{proof}

\end{document}